\documentclass[12pt]{article} 
\usepackage{amsfonts,amsmath,latexsym,amssymb,mathrsfs,amsthm, color, comment}
%\usepackage{showkeys} % shows labels 

%\evensidemargin0cm
%\oddsidemargin0cm
%\textwidth16cm
%\textheight23cm
%\topmargin-2cm

%\setlength{\topmargin}{0in}
%\setlength{\headheight}{0in}
%\setlength{\headsep}{0in}
%\setlength{\footheight}{.25in}
%\setlength{\footskip}{.25in}
%\setlength{\textheight}{9in}
%\setlength{\oddsidemargin}{0in}
%\setlength{\evensidemargin}{0in}
%\setlength{\marginparsep}{0in}
%\setlength{\marginparwidth}{0in}
%\setlength{\textwidth}{6.5in}
%\def\baselinestretch{1.5}
%\begin{document}

%\newtheorem{truethm}{\indent Theorem}
%\newtheorem{lemma}[truethm]{\indent Lemma}
%\newtheorem{Lemma}{\indent Lemma}[section]
%\newtheorem{cor}{Corollary}
%\newtheorem{fact}{\indent Fact}
%\newtheorem{definition}{Definition}
%\newtheorem{Theorem}{\indent Theorem}
%\newtheorem{Corollary}{\indent Corollary}
%\def\Box{[]}

%Eqn numbers within sections and theorems, lemmas ... within sections
% remember to put \setcounter{equation}{0} at start of every section

\def\numberlikeadb{\global\def\theequation{\thesection.\arabic{equation}}}
\numberlikeadb
\newtheorem{theorem}{Theorem}[section]

\newtheorem{corollary}[theorem]{Corollary}

\newtheorem{remark}[theorem]{Remark}
\newtheorem{example}[theorem]{Example}

\numberwithin{equation}{section}

\begin{document}

\title{Poisson approximation of subgraph counts in stochastic block models and a graphon model}
\author{Matthew Coulson\footnote{The Queen's College, University of Oxford, High Street, OXFORD, OX1 4AW, UK},\,\, Robert E. Gaunt\footnote{Department of Statistics,
University of Oxford, 24-29 St Giles', OXFORD OX1 3LB, UK}\,\, and Gesine Reiner$\mathrm{t}^\dagger$
}

\date{\today} 
\maketitle

\begin{abstract} Small subgraph counts can be used as summary statistics for large random graphs. We use the Stein-Chen method to derive Poisson approximations for the distribution of the number of subgraphs in the stochastic block model which are isomorphic to some fixed graph.  We also obtain Poisson approximations for subgraph counts in a graphon-type generalisation of the model in which the edge probabilities are (possibly dependent) random variables supported on a subset of $[0,1]$.  Our results apply when the fixed graph is a member of the class of strictly balanced graphs. 
\end{abstract}

\noindent{{\bf{Keywords:}} Graphon model, stochastic block model, Erd\H{o}s-R\'{e}nyi Mixture model,  subgraph counts, Poisson approximation, Stein-Chen method }

\noindent{{{\bf{AMS 2010 Subject Classification:}}}   90B15, 62E17, 60F05, 05C80

\section{Introduction}

Small subgraph counts can be used as summary statistics for large random graphs; indeed in some graph models they appear as sufficient statistics, see \cite{frank}. Moreover, many networks are conjectured to have over- or under-represented motifs (small subgraphs), see for example \cite{alon}.  Statistics based on small subgraph counts can be used to compare networks, as in  \cite{ali14, sarajlic13}.   To determine which small subgraphs are unusual, assessing the distribution of such motifs is key. While \cite{picard} gives the mean and variance for some common random graph models, \cite{picard} does not derive a distributional approximation. 

In this paper, we address the issue of such a distributional approximation for a large class of models which include stochastic block models and a graphon model but also models with random edge probabilities, provided that the edge probabilities display some local dependence, which will be made clearer in Section 3.

The stochastic block model (SBM) was introduced originally for directed graphs by \cite{holland} and generalised to other graphs by   \cite{ns01}; it is also called  Erd\H{o}s-R\'{e}nyi Mixture Model  in \cite{daubin}, {{and in theoretical computer science it is called the Planted Partition Model \cite{condonkarp}.}}  It has  a wide range of applications, see for example \cite{airoldi, bickelchen, daubin, karrernewman, olhedewolfe}, and \cite{matias} for a recent survey.  The  model is defined as follows.  Consider an undirected random graph on $n$ vertices, with no self-loops or multiple edges, in which the vertices are spread among $Q$ hidden classes with respective proportion vector $f= (f_1,\ldots,f_Q)$.  The class label of a vertex is drawn from a multinomial distribution ${\cal{M}}(1, f)$, and class assignments are independent of each other. Edges $Y_{i,j}$ are independent conditionally on the class of the vertices, and the edge probability depends only on the classes of the vertices:
\begin{equation*}\mathbb{P}(Y_{i,j}=1\,|\,i\in a,j\in b)=\pi_{a,b}.
\end{equation*}
We shall denote this model by $SBM(n,\pi,f)$.  If $\pi_{a,b}=p$ for all $a$ and $b$, the SBM reduces to the classical Erd\H{o}s-R\'{e}nyi random graph model, which we denote by $\mathscr{G}(n,p)$. In this paper, it is assumed that $\pi$ and $f$ are known,  and that  $f_1,\ldots,f_Q>0$; for estimating these quantities see, for example, \cite{ airoldi, latoucherobin, olhedewolfe}.

For a fixed graph $G$, it is known (see, for example, \cite{bhj92}, Theorem 5.B)  that the distribution of the number copies of $G$ in the $\mathscr{G}(n,p)$ model is well approximated by an appropriate Poisson distribution if $G$ is a member of the class of strictly balanced graphs (defined below) as long as $p$ is not too large.  In fact, \cite{bhj92} give explicit bounds on the rate of convergence in the Poisson approximation.   

In this paper, we consider a generalisation of the Poisson approximation to the stochastic block model.  We obtain explicit bounds for the rate of convergence, and consider both the cases that the edge  probabilities $\pi_{a,b}$ are constant and that they are themselves random variables supported on a subset of $[0,1]$.  This paper therefore contains the following two features that have not appeared together before:
the random graphs may have local dependence or inhomogeneity, or both, and
the approximation is quantitative (as opposed to only asymptotic).
The second feature has appeared without the first
(for example in \cite{bhj92}) and the first feature has appeared without the
second (for example in \cite{bollobasetal}), at least for the case of cycles in the case of constant average
degree stochastic block models. It is the combination of both features that is novel.

When the edge probabilities are themselves random variables then we assume that they are only locally dependent, in the sense that each edge has a relatively small number of other edges so that their random edge probabilities are not independent. As an example the vertices may have some exogeneous characteristics such as geographical location which influence the probability of an edge to exist, but only locally. 
 
   The latter case is related to a graphon model, where edge probabilities only depend on those edge probabilities where the edges share a vertex. A graphon is represented by a measureable function $h: [0,1]^2 \rightarrow [0,1]$. A graphon model constructs a random graph on $n$ vertices  by assigning independent $U[0,1]$ variables to each vertex. Conditional on these uniform random variables, all edges are independent, and the probability of an edge between vertices $u$ and $v$ is given by $h(U_u, U_v)$. These graphs appear as limits of exchangeable graphs; see, for example, \cite{aldous, diaconis, lovasz}. They are a special case of inhomogeneous random graph models as considered in \cite{bollobasetal}.    

Setting the scene for counting copies of graphs $G$, let $K_n$ be the complete graph with $n$ edges and $\binom{n}{2}$ edges.  Let $G\subset K_n$ be a fixed graph with $v(G)$ vertices and $e(G)$ edges; let $V(G)$ denote the vertex set and $E(G)$ its edge set.  To avoid trivialities, we assume that $e(G)> 1$ and that $G$ has no isolated vertices. We shall be particularly interested in the case that $G$ is a member of the class of strictly balanced graphs, which we now define according to \cite{bhj92}.  Let
\begin{equation*}
d(G) = \frac{e(G)}{v(G)}.
\end{equation*}
Then the graph $G$ is said to be \emph{strictly balanced} if $d(H)<d(G)$ for all subgraphs $H\subsetneq G$. 

Let $\Gamma$ denote the set of $v(G)$-tuples of elements from $\{1,\ldots,n\}$.  Then, $\alpha\in\Gamma$ is a possible position for the subgraph $G$, and  there are $\binom{n}{v(G)}$ such positions.  
%Of course, for a general subgraph $G$, this undercounts the number of copies of $G$ in $K_n$. 
 To account for re-labelling of vertices,  let $R_{\alpha}(G)$ denote the set of {{all subgraphs of the complete graph on the $v(G)$-tuple $\alpha$ which are isomorphic to $G$}} (a similar notation was introduced in Picard et al$.$ \cite{picard}).  For any $\alpha\in\Gamma$, the number of elements in the set $R_\alpha(G)$ is given by
\begin{equation} \label{rho}
 \rho(G)=\frac{(v(G))!}{a(G)},
 \end{equation} 
  where $a(G)$ is the number of elements in the automorphism group of $G$. 
  % {{While any element  $G'\in R(G)$ is a permutation, abusing notation we identify it with the graph which is a copy of $G$ with the vertices labelled according to $G'$. In particular, $G \in R(G)$ with this abuse of notation.}}

Now, let $\mathscr{G}=(\mathcal{V},\mathcal{E})$ be a random graph on $n$ edges. For $\alpha\in\Gamma$ and $G'\in R_{\alpha}(G)$,  let $X_\alpha(G')$ be the indicator random variable for the occurrence, at the $v(G)$-tuple $\alpha$, of a subgraph $G'$ which is  isomorphic to $G$.  We shall let $W$ denote the total number of copies of $G$ in the random graph $\mathscr{G}$,
\begin{equation}\label{weqn}W=\sum_{\alpha\in\Gamma}\sum_{G'\in R_{\alpha}(G)}X_\alpha(G').
\end{equation}

Here, copies are counted as opposed to induced copies where not only all edges of the graph have to appear, but also no edge which is not in the graph is allowed to appear in the copy. For example, the complete graph $K_n$, $n\geq3$,  contains $(n-1)!/2$ copies, but no induced copy, of an $n$-cycle.

To illustrate our notation, consider counting the number of isomorphic copies of the path on three vertices, denoted by $G$, in a graph $\mathscr{G}$ with vertex set $\{1,2,3,4\}$.  We first construct the vertex set $\Gamma$ of all 3-tuples from the set $\{1,2,3,4\}$.  For the set $\{1,2,3\}\in \Gamma$, we consider the indicators $X_{\{1,2,3\}}(G')$, where $G'\in R_{\{1,2,3\}}(G)$.  The set $R_{\{1,2,3\}}(G)$ contains three non-redundant ways $G_1',G_2',G_3'$ that a copy of $G$ can occur on $\{1,2,3\}$, these being if edges  $\{1,2\}$ and $\{1,3\}$ are present; edges  $\{2,1\}$ and $\{2,3\}$ are present; or edges  $\{3,1\}$ and $\{3,2\}$ are present.  We count the number of occurrences of $G$ in $\{1,2,3\}$ using the indicators $X_{\{1,2,3\}}(G_i')$, $i=1,2,3$, and we then repeat this procedure for all $\alpha\in\Gamma$.  Since $|R_{\{1,2,3\}}(G)|=3$ and $|\Gamma|=\binom{4}{3}=6$, there can be at most 18 copies of $G$ in the graph $\mathscr{G}$.  For example, if $\mathscr{G}$ is the circle graph with edge set $\{\{1,2\},\{2,3\},\{3,4\},\{4,1\}\}$, then $X_{\{1,2,3\}}(\{1,2\},\{1,3\})=0$, $X_{\{1,2,3\}}(\{2,1\},\{2,3\})=1$ and $X_{\{1,2,3\}}(\{3,1\},\{3,2\})=0$.

In the stochastic  block model $SBM(n,\pi,f)$, the conditional occurrence probability of an isomorphic copy $G'$ of the subgraph $G$ on $\alpha = (i_1,\ldots,i_{v(G)})$ given the class of each vertex is
\begin{equation*}
\mathbb{P}(X_\alpha(G')=1\,|\,i_1\in c_1,\ldots,i_{v(G)}\in c_{v(G)})=\prod_{1\leq u< v\leq v(G): (u,v) \in E(G) }\pi_{c_u,c_v}.
\end{equation*}  The occurrence probability of an isomorphic copy $G'$ of $G$ is then 
\begin{equation}\label{mueqn}
\mu(G){{=\mathbb{E} X_\alpha(G')}}= \sum_{c_1,c_2,\ldots,c_{v(G)}=1}^{Q} f_{c_1}f_{c_2}
\cdots f_{c_{v(G)}} \prod_{1 \leq u<v \leq v(G): (u,v) \in E(G) }\pi_{c_u,c_v}.
\end{equation}
Note that for any $\alpha,\beta\in \Gamma$ and $G'\in R_\alpha(G),$ $G''\in R_\beta(G)$ we do indeed have $\mathbb{E}X_\alpha(G')=\mathbb{E}X_{\beta}(G'')$.  We therefore have that
\begin{equation}\label{lambdaeqn}\lambda:=\mathbb{E}W=\binom{n}{v(G)} \rho(G) \mu(G).
\end{equation}

In this paper, we use the Stein-Chen method for Poisson approximation, introduced by \cite{chen 0}, to assess the distributional distance between $\mathcal{L}(W)$ and the $Po(\lambda)$ distribution when the fixed graph $G$ is is a member of the class of strictly balanced graphs.  This discrepancy is measured using the total variation distance, which for non-negative, integer-valued random variables $U$ and $V$ is given by
\begin{equation*}d_{TV}(\mathcal{L}(U),\mathcal{L}(V))=\sup_{A\subseteq\mathbb{Z}^+}|\mathbb{P}(U\in A)-\mathbb{P}(V\in A)|.
\end{equation*} 

In deriving  bounds on the total variation distance, we  exploit the local dependence structure of the indicators $X_{\alpha}(G')$.  To this end, for each $\alpha\in \Gamma$, we introduce a set $A_\alpha$ which can be viewed as a dependency neighbourhood of $\alpha$.  In the SBM, as class assignments are independent and the edge probabilities are given, we can take
\begin{equation*}
A_\alpha = \{ \beta \in \Gamma \colon \lvert \alpha \cap \beta \rvert \geq 1 \}.
\end{equation*}
Here, $A_\alpha$ is a dependency neighbourhood of $\alpha$ in the sense that if $\lvert \alpha \cap \beta \rvert = 0 $, then $X_{\alpha}(G')$ and $X_{\beta}(G'')$ are independent for any $G'\in R_\alpha(G)$, $G'' \in R_\beta(G)$.  In Section 3, the edge  probabilities are random variables, in which case finding a suitable dependency neighbourhood $A_\alpha$ is more involved.
\begin{comment} To see this, for $\lvert \alpha \cap \beta \rvert = 0 $ condition on the class assignments: For $x, y \in \{ 0,1\}$, 
\begin{align*}
\lefteqn{
\mathbb{P} (X_{\alpha}(G) = x, X_{\beta}(G') = y) }\\
&= \sum_{c_1,c_2,\ldots,c_{v(G)}=1}^{| \alpha \cup \beta|} f_{c_1}f_{c_2}
\cdots f_{c_{| \alpha \cup \beta|}}  \mathbb{P} (X_{\alpha}(G) = x | v \in c_v, v = 1, \ldots, | \alpha \cup \beta| )  \\
&\quad\times
 \mathbb{P} (X_{\beta}(G') = y | v \in c_v, v = 1, \ldots, | \alpha \cup \beta|  )
\\
&= \sum_{c_1,c_2,\ldots,c_{v(G)}=1}^{| \alpha \cup \beta|} f_{c_1}f_{c_2}
\cdots f_{c_{| \alpha \cup \beta|}} \mathbb{P} (X_{\alpha}(G) = x | v \in c_v, v = 1, \ldots, | \alpha | )  \\
&\quad\times 
 \mathbb{P}(X_{\beta}(G') = y | v \in c_v, v = 1, \ldots, |  \beta|  )\\
 &=  \mathbb{P} (X_{\alpha}(G) = x) \mathbb{P}(X_{\beta}(G') = y ). 
\end{align*}
\end{comment}
With
\begin{equation*}
\eta_{\alpha}(G') = \sum_{\beta \in A_{\alpha}} \sum_{G'' \in R_\alpha(G)}X_\beta(G'')
\end{equation*}
and
\begin{equation}
\label{thetaeqn}\theta_{\alpha}(G') = X_{\alpha}(G')(\eta_{\alpha}(G')-X_{\alpha}(G')),
\end{equation}
 a simple corollary of Theorem 1 in \cite{agg89}, or of Theorem 1.A in \cite{bhj92} is that  
\begin{equation}\label{dtv}
d_{TV}(\mathcal{L}(W),Po(\lambda)) \leq \lambda^{-1}(1-e^{-\lambda}) \sum_{\alpha \in \Gamma} \sum_{G' \in R_\alpha(G)} \big\{\mathbb{E}X_{\alpha}(G') \mathbb{E}\eta_{\alpha}(G')+\mathbb{E}\theta_{\alpha}(G')\big\}.
\end{equation}
Thus bounding the total variation distance between the distribution of the subgraph counts in the SBM and the $Po(\lambda)$ distribution reduces to bounding the expectations on the right-hand side of (\ref{dtv}).  We shall prove our Poisson approximations for subgraph counts (Theorems \ref{thm2.1} and \ref{thm3.1} and Corollary \ref{graphoncor}) using this approach.  The Poisson approximation results of these theorems are valid when the fixed graph $G$ is strictly balanced and the edge  probabilities $\pi_{a,b}$ are not too large.  These theorems generalise Theorem 5.B of \cite{bhj92}, which asserts that a Poisson approximation is valid in the $\mathscr{G}(n,p)$ model under the same conditions.

The Poisson approximation is valid under these conditions in the SBM  for exactly the same reason as it is in the $\mathscr{G}(n,p)$ model: if $G$ is strictly balanced and the $\pi_{a,b}$ are not too large, with high probability the copies of $G$ are vertex disjoint and the $X_{\alpha}(G)$ are close to being independent. Thus, $W$ is the sum of a large number of almost independent indicators with small means, and a Poisson approximation is valid.  In the $\mathscr{G}(n,p)$ model, the Poisson approximation breaks down if $G$ is not strictly balanced \cite{rv85}, although Compound Poisson approximations may still be valid for certain classes of subgraphs; see \cite{stark}.  For this reason, we restrict our attention to strictly balanced graphs.

The rest of the paper is organised as follows.  In Section 2, we use the Stein-Chen method to derive a Poisson approximation for the number of subgraphs in the SBM which are isomorphic to some fixed graph from the class of strictly balanced graphs.  In Section 3, we consider a generalisation of this problem in which the edge probabilities are now (possibly locally dependent) random variables supported on a subset of $[0,1]$.  Again, we derive a Poisson approximation for the number of copies of a fixed subgraph in this model.  Section 4 gives a Poisson approximation of small graph counts in the graphon model. 

\section{Poisson approximation of subgraph counts in the stochastic block model}
In this section, we obtain a Poisson approximation for the number of subgraphs in the SBM which are isomorphic to a fixed graph from the class of strictly balanced graphs.  Before stating this result, we introduce some notation.  Let
\begin{equation}\label{alpha}
\alpha(G) = \min_H \frac{e(G)-e(H)}{v(G)-v(H)}
\end{equation}
and
\begin{equation}\label{gamma} 
\gamma(G) =\min_H(d(G)v(H)-e(H))= \min_H v(H)\cdot(d(G)-d(H)),
\end{equation}
where the minima are taken over all all non-empty subgraphs $H \subsetneq G$ without isolated vertices.  It is worth noting that the graph $G$ is strictly balanced if $\gamma(G)>0$ or $\alpha(G)>d(G)$; see \cite{bhj92}. Also, let 
\begin{equation} \label{pi*}
\pi^*= \max_{1\leq a< b\leq Q} \pi_{a,b}
\end{equation}
denote the maximum edge probability.

 % We shall also let
%\begin{equation*}\mu_k=\max_F\mu(F),
%\end{equation*}
%where the maxima is taken over all connected graphs $F$ with $k\geq1$ edges. 

\begin{theorem}\label{thm2.1} Suppose that $G$ is a strictly balanced graph.  Then, with the  notation \eqref{mueqn}, \eqref{rho}, \eqref{alpha}, \eqref{gamma} and \eqref{pi*},
\begin{align}
\label{ermg1111thm}d_{TV}(\mathcal{L}(W),Po(\lambda))  &\leq  (1-e^{-\lambda})\rho(G) \bigg\{2 \frac{v(G)^2}{v(G)!}n^{v(G)-1} (\pi^*)^{e(G)} + \pi^*
\nonumber \\
 &\quad+ \sum_{s=2}^{v(G)-1} \binom{v(G)}{s}  \frac{n^{v(G)-s}(\pi^*)^{\kappa(G,s)}}{(v(G)-s)!} \bigg\},  
\end{align}
where 
\begin{equation} \label{kappa} 
\kappa(G,s)=\max(e(G)-sd(G)+\gamma(G),(v(G)-s)\alpha(G)).
\end{equation} 
\end{theorem}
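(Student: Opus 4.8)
\noindent\emph{Proof proposal.} The plan is to start from the Stein--Chen estimate \eqref{dtv} and bound its right-hand side by grouping the summands according to the overlap size $s=\lvert\alpha\cap\beta\rvert$ of a pair $(\alpha,\beta)$. Write $\Sigma_1=\sum_{\alpha\in\Gamma}\sum_{G'\in R_\alpha(G)}\mathbb{E}X_\alpha(G')\,\mathbb{E}\eta_\alpha(G')$ and $\Sigma_2=\sum_{\alpha\in\Gamma}\sum_{G'\in R_\alpha(G)}\mathbb{E}\theta_\alpha(G')$, so that \eqref{dtv} reads $d_{TV}(\mathcal{L}(W),Po(\lambda))\le\lambda^{-1}(1-e^{-\lambda})(\Sigma_1+\Sigma_2)$. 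By the remark after \eqref{mueqn}, $\mathbb{E}X_\alpha(G')=\mu(G)$ for every admissible pair, so $\mathbb{E}\eta_\alpha(G')$ equals $\rho(G)\mu(G)$ times $\lvert A_\alpha\rvert$ and does not depend on $(\alpha,G')$; together with \eqref{lambdaeqn} this gives $\lambda^{-1}\Sigma_1=\lvert A_\alpha\rvert\,\rho(G)\,\mu(G)$. By \eqref{thetaeqn}, $\mathbb{E}\theta_\alpha(G')=\sum\mathbb{E}[X_\alpha(G')X_\beta(G'')]$, the sum over $\beta\in A_\alpha$ and $G''\in R_\beta(G)$ with $(\beta,G'')\ne(\alpha,G')$.

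\emph{Step 1 (pointwise bound).} Fix such a pair and set $s=\lvert\alpha\cap\beta\rvert$. Any edge common to the copies $G'$ and $G''$ joins two vertices of $\alpha\cap\beta$, so after deleting isolated vertices the common edges form a subgraph $H$ of $G$ with $v(H)\le s$ and $e(H)=\lvert E(G')\cap E(G'')\rvert$. Conditioning on the vertex classes, bounding each of the $e(G)-e(H)$ edges of $G''$ that are not edges of $G'$ by $\pi^*$ (cf.\ \eqref{pi*}), and averaging over the classes gives $\mathbb{E}[X_\alpha(G')X_\beta(G'')]\le\mu(G)(\pi^*)^{e(G)-e(H)}$. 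Strict balance controls $e(G)-e(H)$ in two complementary ways. First, writing $e(H)=d(H)v(H)$, the definition \eqref{gamma} of $\gamma(G)$ gives $e(H)\le d(G)v(H)-\gamma(G)\le s\,d(G)-\gamma(G)$, hence $e(G)-e(H)\ge e(G)-s\,d(G)+\gamma(G)$; when $H$ is edgeless the same inequality holds as long as $s\ge2$, since then $\gamma(G)\le s\,d(G)$. Second, the subgraph of $G''$ induced on those of its vertices lying in $\alpha$ is a proper subgraph of $G$ on at most $s$ vertices, so by \eqref{alpha} at least $(v(G)-s)\alpha(G)$ edges of $G''$ are incident to a vertex outside $\alpha$ and hence lie outside $E(G')$; thus $e(G)-e(H)\ge(v(G)-s)\alpha(G)$. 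Combining these, $\mathbb{E}[X_\alpha(G')X_\beta(G'')]\le\mu(G)(\pi^*)^{\kappa(G,s)}$ with $\kappa(G,s)$ as in \eqref{kappa}.

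\emph{Step 2 (counting and summation).} For a fixed $\alpha$ there are $\binom{v(G)}{s}\binom{n-v(G)}{v(G)-s}\le\binom{v(G)}{s}\frac{n^{v(G)-s}}{(v(G)-s)!}$ elements $\beta\in\Gamma$ with $\lvert\alpha\cap\beta\rvert=s$, each carrying $\rho(G)$ copies $G''\in R_\beta(G)$. Splitting $\Sigma_2$ according to $s$ and applying Step 1: the part with $s=v(G)$ (so $\beta=\alpha$, $G''\ne G'$, whence $e(H)\le e(G)-1$) contributes at most $(\rho(G)-1)\pi^*$ to $\lambda^{-1}\Sigma_2$; the part with $s=1$ (where $H$ is edgeless) contributes at most $\frac{v(G)^2}{v(G)!}n^{v(G)-1}\rho(G)(\pi^*)^{e(G)}$; and the part with $2\le s\le v(G)-1$ contributes at most $\binom{v(G)}{s}\frac{n^{v(G)-s}}{(v(G)-s)!}\rho(G)(\pi^*)^{\kappa(G,s)}$. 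For $\Sigma_1$ one expands $\lvert A_\alpha\rvert=\sum_{s=1}^{v(G)}\binom{v(G)}{s}\binom{n-v(G)}{v(G)-s}$ and uses $\mu(G)\le(\pi^*)^{e(G)}$: the $s=1$ term yields a further $\frac{v(G)^2}{v(G)!}n^{v(G)-1}\rho(G)(\pi^*)^{e(G)}$, and since $\kappa(G,s)\le e(G)$ for $s\ge2$ --- here one checks $\gamma(G)\le s\,d(G)$ and $(v(G)-s)\alpha(G)\le e(G)$ --- the terms with $2\le s\le v(G)-1$, as well as the $s=v(G)$ term $\rho(G)\mu(G)$, are absorbed into the terms already found (the last using $n\ge v(G)$). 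Adding the two $s=1$ contributions produces the factor $2$; substituting into \eqref{dtv} and factoring out $(1-e^{-\lambda})\rho(G)$ yields \eqref{ermg1111thm}.

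\emph{Main obstacle.} The crux is Step 1: extracting the exponent $\kappa(G,s)$ from the strictly balanced hypothesis, in particular the interplay of the two terms in the maximum in \eqref{kappa} --- one controlling the edges \emph{inside} the overlap via $\gamma(G)$, the other the edges attached to the \emph{new} vertices via $\alpha(G)$ --- together with the verification that $\kappa(G,s)\le e(G)$ for $s\ge 2$, which is exactly what keeps the second-moment sum $\Sigma_1$ from dominating. The endpoints $s=1$ and $s=v(G)$ need separate treatment: for $s=1$ the overlap is forced to be edgeless, so only the exponent $e(G)$ is available (indeed $\kappa(G,1)$ may exceed $e(G)$), while for $s=v(G)$ the two copies lie on a common vertex set but must differ in at least one edge, which is what produces the term $\rho(G)\pi^*$.
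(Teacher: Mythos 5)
Your proposal is correct and follows essentially the same route as the paper's proof: the Stein--Chen bound (\ref{dtv}), a partition of the dependency neighbourhood by overlap size $s=\lvert\alpha\cap\beta\rvert$, the conditional bound $\mathbb{E}[X_\alpha(G')X_\beta(G'')]\le\mu(G)(\pi^*)^{e(G)-e(H)}$, and the two strict-balance estimates $e(G)-e(H)\ge e(G)-sd(G)+\gamma(G)$ and $e(G)-e(H)\ge(v(G)-s)\alpha(G)$ producing the exponent $\kappa(G,s)$ of \eqref{kappa} (you are in fact slightly more careful than the paper about the degenerate case where the common-edge graph $H$ is edgeless, which falls outside the minima in \eqref{alpha} and \eqref{gamma}). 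The one point to tighten is your treatment of $\Sigma_1$: folding its $s\ge2$ pieces into the $\Sigma_2$ sum via $\kappa(G,s)\le e(G)$ would double those terms and give constants larger than claimed, whereas the paper simply bounds $\lvert A_\alpha\rvert\le v(G)\binom{n}{v(G)-1}\le\frac{v(G)^2}{v(G)!}n^{v(G)-1}$ in one step, so that all of $\lambda^{-1}\Sigma_1$ is absorbed by a single copy of the first term and the factor $2$ in \eqref{ermg1111thm} emerges exactly.
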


\begin{proof}
We establish our bound by bounding the right-hand side of inequality (\ref{dtv}), starting with $\sum_{\alpha \in \Gamma} \sum_{G' \in R_\alpha(G)} \mathbb{E}X_{\alpha}(G') \mathbb{E}\eta_{\alpha}(G')$. 
For  the dependence set $A_\alpha = \{ \beta \in \Gamma \colon \lvert \alpha \cap \beta \rvert \geq 1 \}$,
\begin{equation}\label{eqn22}\lvert A_{\alpha} \rvert \leq v(G) \binom{n}{v(G)-1} \leq \frac{v(G)^2}{v(G)!}n^{v(G)-1}.
\end{equation}
It is now clear from \eqref{mueqn} and (\ref{eqn22}) that
\begin{align}\label{ineq1}
\mathbb{E}\eta_{\alpha}(G') &=  \sum_{\beta \in A_{\alpha}} \sum_{G' \in R_\alpha(G)} \mathbb{E}X_\beta(G')  =  \lvert A_{\alpha}\rvert \rho(G) \mu(G)  \nonumber \\
&\leq  \frac{\rho(G)v(G)^2}{v(G)!}n^{v(G)-1} \mu(G). 
\end{align}

The more involved part of the proof, where the assumption of strictly balancedness comes into play, is to
bound the expectation $\mathbb{E}\theta_{\alpha}(G')$ from (\ref{thetaeqn}).  When $\alpha$ and $\beta$ have considerable overlap, then $\mathbb{E}X_{\alpha}(G')X_{\beta}(G'')$ may be large compared to $ \mathbb{E}X_{\alpha}(G')$ - but there are not many $\beta$'s which have considerable overlap with $\alpha$. To take account of the overlap, we partition $A_\alpha$ into sets $\{\Gamma_\alpha^s\}_{1\leq s\leq v(G)}$, where $\Gamma_\alpha^s=\{ \beta \in \Gamma \colon \lvert \alpha \cap \beta \rvert =s \}$.  These sets can be bounded above by
\begin{equation*}
\lvert \Gamma_{\alpha}^s \rvert \leq \binom{v(G)}{s} \binom{n}{v(G)-s} \leq \binom{v(G)}{s} \frac{n^{v(G)-s}}{(v(G)-s)!}.
\end{equation*}
Now, recalling (\ref{thetaeqn}), 
\begin{equation*}
\mathbb{E}\theta_{\alpha}(G') = \sum_{s=1}^{v(G)-1}\sum_{\beta \in \Gamma_{\alpha}^s}
\sum_{G'' \in R_\beta(G)} \mathbb{E}X_{\alpha}(G')X_{\beta}(G'') + \sum_{\substack{G'' \in R_\beta(G) \\ G' \neq G''}} \mathbb{E}X_{\alpha}(G')X_{\alpha}(G'').
\end{equation*}
To bound the expectations in the above expression, 
%it will be helpful to introduce some notation.  Following Picard et al$.$ \cite{picard}, we let $G \underset{s}{\Omega} G'$ represent a subgraph with $2v(G)-s$ edges made of two overlapping occurrences of $G$ and $G'$.  Then $X_{\alpha}(G)X_{\beta}(G')=X_{\alpha \cup \beta}(G \underset{s}{\Omega} G')$ for all $s=|\alpha\cap\beta|\geq1$.  With this notation, we can write
we consider the cases of different overlap $s$ separately.  
%\begin{equation*}
%\mathbb{E}\theta_{\alpha}(G) = \sum_{s=1}^{v(G)-1}\sum_{\beta \in \Gamma_{\alpha}^s}
%\sum_{G' \in R(G)} \mathbb{E}X_{\alpha \cup \beta}(G \underset{s}{\Omega} G') +
%\sum_{\substack{G' \in R(G) \\ G \neq G'}} \mathbb{E}X_{\alpha}(G \underset{v(G)}{\Omega} G').
%\end{equation*}
%In bounding the above expectations, we shall use that, for any subgraph $H$ with at least $k\geq1$ edges, the occurrence probability of $H$ satisfies $\mu(H)\leq \mu_k$.

Firstly, for $G'\not=G''$, and for $s=v(G)$, so that $\alpha = \beta$, there must be at least 1 edge present in $G''$ which is not in $G'$.
Due to the conditional independence of the edges, for any edge indicator $Y_{i,j} $ which is not included in $X_\alpha (G')$, 
\begin{equation}\label{cond}
\mathbb{P}(Y_{i,j}=1 | X_\alpha(G') = 1) = \sum_{a,b=1}^Q \pi_{a,b} \mathbb{P}(i \in a, j \in b | X_\alpha(G') = 1) \le \pi^*. 
\end{equation}
Hence 
\begin{equation*}
\mathbb{E}X_{\alpha}(G')X_{\beta}(G'') \leq  \mu(G) \pi^* \quad\mbox{ for } \beta \in \Gamma_\alpha^{v(G)}.
\end{equation*}

Next, we consider the case $s=1$, in which $\alpha$ and $\beta$ only intersect at a single vertex.  As a result, $G'$ and $G''$ cannot share an edge.
Using the generalisation of \eqref{cond} that for any set of edges $A$ which does not overlap with the edges in $X_\alpha(G')$,  
\begin{equation}\label{condgen}
\mathbb{P}(Y_{i,j}=1, (i,j) \in A | X_\alpha(G') = 1) \le (\pi^*)^{|A|}, 
\end{equation}
it follows that 
\begin{equation*}
\mathbb{E}X_{\alpha}(G')X_{\beta}(G'') \le \mu(G) (\pi^*)^{e(G)}\quad \mbox{ for } \beta \in \Gamma_\alpha^1. 
\end{equation*}

Finally, we consider the case $2 \leq s \leq v(m)-1$.  We shall derive two bounds for the expectation  $\mathbb{E}X_{\alpha}(G')X_{\beta}(G'')$. 

There are $e(G)$ edges from the subgraph $G'$ given on $\alpha$ and we now consider the number of additional edges resulting from the subgraph $G''$ given on $\beta$. 
Here the underlying graph is $K_n$, the complete graph. 
Consider the subgraph $H$ {{of the intersection graph of $ G' $ and $ G''$}} induced on the intersection of $\alpha$ and $\beta$, which has vertex set $V(H)=\alpha\cap\beta$ and edge set $E(H)$, for which $e\in E(H)$ if and only if $e\in E(G')\cap E(G'')$. Due to the fact that $\lvert \alpha \cap \beta \rvert = s$, we have $v(H)=s$, and, because $G'$ is strictly balanced, it must be the case that $d(H) < d(G)$ (we have $d(G')=d(G)$, as $G$ and $G'$ are isomorphic), and so $e(H)<sd(G)$.  Recalling \eqref{gamma}, we have $e(H)+\gamma(G) \leq sd(G)$, that is $e(H) \leq sd(G)-\gamma(G)$.  Thus, there are at least $e(G)-(sd(G)-\gamma(G)) = e(G)-sd(G)+\gamma(G)$ edges from $G''$ which are not in the subgraph $G'$, and so 
 the union graph of $ G' $ and $ G''$ on $\alpha \cup \beta$ 
has at least $2e(G)-sd(G)+\gamma(G) $ edges.

Alternatively, with $\alpha(G)$ as in \eqref{alpha},
\begin{align}
e(G)-e(H) & =  (v(G)-v(H))\frac{e(G)-e(H)}{v(G)-v(H)} \nonumber \\
& \geq  (v(G)-v(H)) \alpha(G) \nonumber \\
& =  (v(G)-s) \alpha(G), \nonumber
\end{align}
and therefore there are at least $e(G)+(v(G)-s) \alpha(G)$ edges in  the union graph of $ G' $ and $ G''$ on $\alpha \cup \beta$.  This bound in connection with \eqref{condgen} leads to the bound
\begin{equation*}
\mathbb{E}X_{\alpha}(G')X_{\beta}(G'') \leq \mu(G) (\pi^*)^{\kappa(G,s)}\quad \mbox{ for } \beta \in \Gamma_\alpha^s ,
% \min\left(\mu_{2e(G)-sd(G)+\gamma(G)},\mu_{e(G)+(v(G)-s) \alpha(G))}\right) =\mu_{\kappa(G,s)}.
\end{equation*}
where $\kappa(G,s)=\max(e(G)-sd(G)+\gamma(G),(v(G)-s)\alpha(G))$.
Collecting the bounds  gives
\begin{align}
\mathbb{E}\theta_{\alpha}(G') & \leq  \mu(G)  \left\{ \sum_{\substack{G'' \in R_\beta(G) \\ G' \neq G''}} \pi^*+\sum_{\substack{\beta \in \Gamma_{\alpha}^1 \\ G'' \in R_\beta(G)}} (\pi^*)^{e(G)} + \sum_{s=2}^{v(G)-1}
\sum_{\substack{\beta \in \Gamma_{\alpha}^s \\ G'' \in R_\beta(G)}}(\pi^*)^{\kappa(G,s)} \right\}  \nonumber  \\
%& \leq  \rho(G)\bigg\{\mu_{e(G)+1}+\lvert \Gamma_{\alpha}^1 \rvert \mu_{2e(G)}+\sum_{s=2}^{v(G)-1} \lvert \Gamma_{\alpha}^s \rvert \mu_{\kappa(G,s)}\bigg\} \nonumber  \\
& \leq  \rho(G) \mu(G)  \bigg\{ \pi^* +\frac{v(G)^2}{v(G)!}n^{v(G)-1} (\pi^*)^{e(G)}\nonumber\\
\label{ineq3}&\quad+\sum_{s=2}^{v(G)-1} \binom{v(G)}{s} \frac{n^{v(G)-s}(\pi^*)^{\kappa(G,s)}}{(v(G)-s)!}\bigg\}.
\end{align}
Finally, substituting  (\ref{ineq1}) and (\ref{ineq3}) into (\ref{dtv}) {{and recalling \eqref{lambdaeqn}}} yields (\ref{ermg1111thm}).
\end{proof}

\begin{remark}
\begin{enumerate}
 \item The stochastic block model structure enters the proof only through the expression for $\mu(G)$ as well as the bound \eqref{condgen}. 
 
 \item Theorem \ref{thm2.1} generalises Theorem 5.B of \cite{bhj92} for the Erd\H{o}s-R\'{e}nyi random graph model to the Stochastic block model.  When we take $\pi_{a,b}=p$ for all $a$, $b$ we recover the same rate of convergence as that  given by Theorem 5.B of \cite{bhj92}.  Indeed the graph combinatorics arguments in our proof are strongly related to those in the proof of Theorem 5.B of \cite{bhj92}. It should, however, be noted that our proof uses a local coupling approach whereas the proof in \cite{bhj92} uses size bias couplings. 

\item To assess the behaviour of the bound it may be advantageous to use the bound 
 $1 - e^{-\lambda} \le \min(1, \lambda)$. Heuristically, a Poisson approximation should hold when $\mu(G)$ is small. When $\mu$ is so small that $\lambda < 1$ then the factor $1 - e^{-\lambda}$ is beneficial. 
 \item  
For a strictly balanced graph, 
  \begin{equation} \label{kappaineq} \kappa(G,s) \ge (v(G)-s) \alpha(G) > (v(G)-s) d(G)
  \end{equation} for all $s =0, \ldots, v(G)-1$. Let 
  $\Delta \kappa = \kappa(G,s)  -  (v(G)-s) d(G)$.  Then $\Delta \kappa > 0$. 
 Using \eqref{condgen} we can bound $\mu(G) \le (\pi^*)^{e(G)}$. 
 If $n (\pi^*)^{d(G)} $ is bounded by $c$  as $n \rightarrow \infty$ then
  $\lambda \le \frac{\rho(G)}{v(G)!} c^{v(G)}$ and 
  $n^{v(G)-s} (\pi^*)^{\kappa(G,s)} \le c^{v(G)-s} (\pi^*)^{\Delta \kappa}$  . Moreover the bound in Theorem \ref{thm2.1} is then of order $O(\min(n^{-1}, n^{-\frac{\Delta \kappa}{d(G)}}))$ as $n \rightarrow \infty$, with proportion vector $f$ and graph $G$ fixed.  

 \item Theorem \ref{thm2.1} is not an asymptotic result but an explicit bound, which may or may not be small.

\item The  result of Theorem \ref{thm2.1} is perhaps most interesting when the limiting $Po(\lambda)$ distribution is non-degenerate in the limit $n\rightarrow\infty$. Suppose that there exist universal constants $c$ and $C$ such that $cn^{-1/d(G)}\leq \pi_{a,b}\leq Cn^{-1/d(G)}$ for all $a,b$.  Then using the inequality 
 $\frac{m^k}{k^k}\leq\binom{m}{k}\leq\frac{m^k}{k!}$, $1\leq k\leq m$  and \eqref{lambdaeqn} we obtain 
\begin{equation*}\frac{\rho(G)}{v(G)^{v(G)}}c^{e(G)}\leq\lambda\leq\frac{\rho(G)}{v(G)!}C^{e(G)}.
\end{equation*} 
Moreover,  
\begin{align} \label{corpn}
d_{TV}(\mathcal{L}(W),Po(\lambda)) &\leq  \min\left(1, \frac{\rho(G)}{v(G)!}C^{e(G)} \right) \rho(G)  \bigg\{\frac{2v(G)^2}{v(G)!}C^{e(G)}n^{-1}\nonumber\\  
&\quad+ Cn^{-1/d(G)} +\min(A,B) \bigg\}, 
\end{align}
where
\begin{eqnarray*}A&=&(1+C^{\alpha(G)})^{v(G)-1}n^{1-\alpha(G)/d(G)};\\
B&=&C^{e(G)+\gamma(G)}(1+C^{-d(G)})^{v(G)-1}n^{-\gamma(G)/d(G)}.
\end{eqnarray*}
\end{enumerate} 
\end{remark} 

%\begin{corollary}In the $ERMG(n,\boldsymbol{\pi},\boldsymbol{\alpha})$ model let $p=\max_{1\leq a<b\leq Q}\pi_{a,b}$.  Suppose $G$ is a strictly balanced graph.  Then
%\begin{align}
%d_{TV}(\mathcal{L}(W),Po(\lambda)) &\leq \lambda^{-1}(1-e^{-\lambda})\frac{\rho(G)^2}{v(G)!}  n^{v(G)} \bigg\{\frac{2v(G)^2}{v(G)!}n^{v(G)-1}p^{2e(G)} \nonumber \\ 
%\label{ermg1111}&\quad+ p^{e(G)+1} + \sum_{s=2}^{v(G)-1} \binom{v(G)}{s}  \frac{n^{v(G)-s}p^{\kappa(G,s)}}{(v(G)-s)!} \bigg\}.
%\end{align}
%\end{corollary}

%\begin{proof}Apply the inequalities $\mu(G)\leq p^{e(G)}$ and $\mu_k\leq p^k$, $k\geq1$, to (\ref{ermg1111thm}).
%\end{proof}

\begin{example}\emph{We now use \eqref{corpn} to obtain Poisson approximations for the number of copies of the following fixed graphs with $v\geq3$ vertices in the $SBM(n,\pi,f)$ model.  We consider the following strictly balanced graphs {{on $v$ vertices each}}:}

\begin{enumerate}
\item[$G_{1,v}$] \emph{a tree on the $v$ vertices, with $v-1$ edges;}

\item[$G_{2,v}$] \emph{the cycle graph on the $v$ vertices (with $v$ edges);}

\item[$G_{3,v}$] \emph{the complete graph on $v$ vertices with one edge removed;}

\item[$G_{4,v}$] \emph{$K_v$, the complete graph on $v$ vertices. }
\end{enumerate}

\end{example}

In order to apply \eqref{corpn}, we must compute the quantities $d(G)$, $\alpha(G)$ and $\gamma(G)$ for each graph $G$.  These quantities are easy to compute, and the values are given in Table \ref{dag}.  If for a given graph $G$  there exist universal constants $c$ and $C$ such that $cn^{-1/d(G)}\leq \pi_{a,b}\leq Cn^{-1/d(G)}$ for all $a,b$, then a bound for the total variation distance between the distribution of $W$ and the $Po(\lambda)$ distribution now follows directly from  \eqref{corpn}.  In Table \ref{secondtable}, for each graph $G$, we give the resulting {{bounds on the}} rate of convergence in terms of $n$.  For this rate of convergence it is assumed that the proportion vector $f= f(n) $ remains  constant as $n \rightarrow \infty$,  and that $G$ does not change with $n$. We also give a scaling of the edge probabilities that is required to given a non-degenerate $\lambda$ in the limit.  This scaling is given in terms of $\pi^*=\max_{1\leq a<b\leq Q}\pi_{a,b}$ (note that all the $\pi_{a,b}$ are of the same order). {{ Table \ref{secondtable} shows that the bound on the rate of convergence for the tree graph may be considerably larger than the bound on the rate of convergence in the cycle graph.}}

\begin{table}[ht]
\caption{Values of $d(G)$, $\alpha(G)$ and $\gamma(G)$} 
\centering

\begin{tabular}{ |p{1.6cm}||p{1.6cm}|p{3.1cm}|p{4.6cm}|  }
 \hline
 Graph $G$ & $d(G)$ &$\alpha(G)$&$\gamma(G)$\\
  \hline
 $G_{1,v}$   & $\frac{v-1}{v}$    & $\frac{(v-1)-1}{v-2}=1$ & $\frac{(v-1)^2}{v}-(v-2)=\frac{1}{v}$ \vspace{1mm} \\
 $G_{2,v}$ & 1 & $\frac{v-1}{v-2}$ & $1$ \vspace{1mm}\\
 $G_{3,v}$   & $\frac{(v+1)(v-2)}{2v}$ & $\frac{\binom{v}{2}-1-1}{v-2}=\frac{v^2-v-4}{2(v-2)}$ &  $1/3$ if $v=3$  and \\
 &&& 
 $\frac{(v+1)(v-2)}{2}-\left(\binom{v}{2}-2\right)=1$ \\
 &&& if $v\geq4$ \\
 $G_{4,v}$  &   $\frac{v-1}{2}$  & $\frac{\binom{v}{2}-1}{v-2}=\frac{v+1}{2}$ & $\frac{(v-1)v}{2}-\left(\binom{v}{2}-1\right)=1$\vspace{1mm}\\
 \hline
\end{tabular}
\label{dag}
\end{table}

\begin{table}[ht]
\caption{{{Scaling and bounds on the rate}} of convergence} 
\centering
\begin{tabular}{ |p{1.3cm}||p{3.8cm}|p{6.8cm}|  }
 \hline
 Graph  & Scaling &$d_{TV}(\mathcal{L}(W),Po(\lambda))$\\
  \hline
 $G_{1,v}$   & $\pi^*=Cn^{-v/(v-1)}$    & $O(n^{-1/(v-1)})=O((\pi^*)^{1/v})$  \vspace{1mm} \\
 $G_{2,v}$ & $\pi^*=Cn^{-1}$ & $O(n^{-1})=O(\pi^*)$  \vspace{1mm}\\
 $G_{3,v}$   & $\pi^*=Cn^{-2v/(v+1)(v-2)}$ & 
$O(n^{-1/2})=O((\pi^*)^{1/3})$ if $v=3$ \\
&& and \\ 
&& $O(n^{-2/(v-1)})=O((\pi^*)^{(v+1)(v-2)/v(v-1)})$ \\
&& if $v\geq4$  \\
 $G_{4,v}$  &   $\pi^*=Cn^{-2/(v-1)}$  & $O(n^{-2/(v-1)})=O(\pi^*)$ \vspace{1mm}\\
 \hline
\end{tabular}

\label{secondtable}
\end{table}

\section{Subgraph counts in graph models with random edge  probabilities}

In this section, we consider a model  in which the edge  probabilities are themselves random variables. Let $I= \{u,v: 1 \le u < v \le n\}$ be the index set of potential edges and for $(u,v) \in I $ let $\Theta_{u,v} = \Theta_{v,u} \in [0,1]$ be random variables; given $\Theta_{u,v} = \theta_{u,v}$ the edge indicator $Y_{u,v}$ is Bernoulli distributed with parameter $\theta_{u,v}$. 
Conditional on the edge probabilities $\{\Theta_{u,v}: (u,v) \in I\}$ the edge indicator variables $\{Y_{u,v}: (u,v) \in I\}$ are assumed to be independent. 

We shall assume a local dependence structure for the edge probabilities: 
 for any $(u,v) \in I$ there is a set $B_{u,v}$ such that for any edge set ${\mathcal{E}}$, the collection of random variables $\{ \Theta_{u,v}: (u,v) \in {\mathcal{E}}\} $ is independent of the collection of random variables 
  $\{ \Theta_{x,y}: (x,y)  \in  \left( \cup_{(u,v) \in {\mathcal{E}} } B_{u,v} \right)^c\}. $
 Moreover, we assume that $B_{u,v}$ is of the form
\[ B_{u,v} = \{ (x, w) \in I: x \in M(u,v), w \in N(u,v)\}.\]
We shall often think of $N(u,v)$ as being a small set compared to $\{1,\ldots,n\}$, whereas $M(u,v)$ could be a large set. 
 We denote the least upper bound on $\{|N({u,v})|, (u,v) \in I\}$ by $g$ so that $$|N(u,v)| \le g$$ for all $(u,v)$. For independent edges, if $u<v$ we take $M(u,v) = \{u\}$ and $N(u,v)= \{ v\}$ so that 
 $B_{u,v} = \{ (u,v)\}$ and $g=1$; for graphon models, we can take $M(u,v) = \{ 1, \ldots, n\} $ and $ N(u,v) = \{ u,v\}$ so that  $ B_{u,v} = \{ (x, w) \in I: w \in \{ u, v\} \}$, and  $g=2$. Other examples could include exogenous covariates such as geographic location; edge random variables could be independent if they are further than a certain geographic distance away from each other.

The dependency structure  is now more involved. For $\alpha = (\alpha_1, \ldots, \alpha_{v(G)})$ let ${\mathcal{E}}(\alpha) =  \{(i,j): i \ne j, i, j \in \{ \alpha_1, \ldots, \alpha_{v(G)}\} \}$ denote the set of edges of the complete graph on $\alpha$. 
Then the set 
\begin{equation}\label{aset2}
A_\alpha = \{ \beta \in \Gamma \colon \lvert {\mathcal{E}}(\beta)  \cap \left( \cup_{(u,v) \in {\mathcal{E}}(\alpha)}  B_{u,v} \right)  \rvert \geq 1 \}
\end{equation}
is a dependency neighbourhood of $\alpha$. In particular, if $\beta \not\in A_\alpha$ then $\{ \Theta_{x,y}: (x,y) \in  {\mathcal{E}}(\beta)\} $ is independent of 
$\{  \Theta_{u,v}: (u,v) \in  {\mathcal{E}}(\alpha)\} $.
We can bound the size of this dependency neighbourhood as follows. For $\beta \in A_\alpha$ at least one of the vertices of $\beta$ is in a set $N(u,v)$ for some $u, v \in {\mathcal{E}}(\alpha)$. Each of these sets $N(u,v)$ has at most $g$ elements. Hence 
\begin{equation}\label{aboundgen}
| A_\alpha| \le g v(G) {n \choose {v(G)-1} }  .
\end{equation} 

For a set of edges ${\bf{\gamma}} =  \{ \gamma_1, \gamma_2, \ldots, \gamma_k \} $ we introduce the notation
$V({\bf{\gamma}})$ for the set of vertices which are endpoints in ${\bf{\gamma}}$, so that $|V(\gamma)| \le 2 | {\bf{\gamma}}|.$  We let 
\begin{equation}\label{nukvs}
\nu_{k, v, s} = \max_{ \stackrel{{\bf{\gamma}} = \{ \gamma_1, \gamma_2, \ldots, \gamma_k\}; {\bf{\delta}} = \{ \delta_1, \beta_2, \ldots, \delta_v\}: }{
{\bf{\gamma}} \cap  {\bf{\delta}} = \emptyset; | V({\bf{\gamma}} )\cap  V({\bf{\delta}})| = s
} }
\mathbb{P}\left( \prod_{i=1}^k Y_{\gamma_i} = 1 \Big| \prod_{j=1}^v Y_{\delta_v} = 1\right).
\end{equation} 
With 
$\mu(G) =\mathbb{E} X_\alpha(G') $  and 
\begin{equation*}\label{elambdaeqn}\lambda:=\mathbb{E}W=\binom{n}{v(G)} \rho(G)  \mu(G)
\end{equation*}
we obtain the following variant of Theorem \ref{thm2.1}.

\begin{theorem}\label{thm3.1} Assume that the $\pi_{a,b}$ are arbitrary random variables supported on a subset of $[0,1]$.  Let
{{$\nu_{k,v,s}$ be as in \eqref{nukvs}.}} 
 Suppose that $G$ is a strictly balanced graph.  Then
\begin{align}
d_{TV}(\mathcal{L}(W),Po(\lambda)) &\leq (1-e^{-\lambda})\rho(G) g  \bigg\{2\frac{v(G)^2}{v(G)!}n^{v(G)-1}
\nu_{e(G), e(G), 1} + \nu_{1, e(G), 1}  \nonumber \\ 
\label{ermg1111thm31}&\quad+  \sum_{s=2}^{v(G)-1} \binom{v(G)}{s}  \frac{n^{v(G)-s}\nu_{\kappa(G,s), e(G), s}}{(v(G)-s)!} \bigg\},
\end{align}
where $\kappa(G,s)$ is as in Theorem \ref{thm2.1}. 
\end{theorem}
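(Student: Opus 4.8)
The plan is to follow the proof of Theorem~\ref{thm2.1} essentially line by line, making only two substantive changes: replace the SBM dependency neighbourhood $\{\beta:\lvert\alpha\cap\beta\rvert\ge 1\}$ by the set $A_\alpha$ of \eqref{aset2}, and replace the elementary estimates \eqref{cond}--\eqref{condgen} (which used conditioning on vertex classes together with the fact that each conditional edge probability is at most $\pi^*$) by estimates expressed through the quantities $\nu_{k,v,s}$ of \eqref{nukvs}, obtained by conditioning on the random edge probabilities $\{\Theta_{u,v}\}$. Since the local-dependence Poisson bound \eqref{dtv} (a corollary of Theorem~1 of \cite{agg89} or Theorem~1.A of \cite{bhj92}) is valid for any choice of dependency neighbourhood, and \eqref{aset2} is one, \eqref{dtv} holds verbatim with this $A_\alpha$, and it suffices to bound the two sums on its right-hand side.

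For the first sum $\sum_{\alpha}\sum_{G'}\mathbb{E}X_\alpha(G')\mathbb{E}\eta_\alpha(G')$ I would use \eqref{aboundgen} in place of \eqref{eqn22}: since $\mathbb{E}X_\beta(G'')=\mu(G)$ for all $\beta,G''$, one has $\mathbb{E}\eta_\alpha(G')=\lvert A_\alpha\rvert\rho(G)\mu(G)\le g\,\frac{\rho(G)v(G)^2}{v(G)!}\,n^{v(G)-1}\mu(G)$, which, after summing over the $\binom{n}{v(G)}\rho(G)$ pairs $(\alpha,G')$ and dividing by $\lambda$ as in \eqref{dtv}, contributes $(1-e^{-\lambda})\rho(G)g\,\frac{v(G)^2}{v(G)!}\,n^{v(G)-1}\mu(G)$; a crude bound of $\mu(G)$ by an appropriate $\nu$ (the analogue of $\mu(G)\le(\pi^*)^{e(G)}$ from Theorem~\ref{thm2.1}, immediate from \eqref{nukvs}) then turns this into one of the two copies of the first term in \eqref{ermg1111thm31}.

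For $\mathbb{E}\theta_\alpha(G')$ I would again split $A_\alpha$ according to the vertex overlap $s=\lvert\alpha\cap\beta\rvert$, now also allowing $s=0$, since — unlike in the SBM — \eqref{aset2} can contain $\beta$ that are vertex-disjoint from $\alpha$ (this happens precisely when the sets $N(u,v)$ reach outside $\alpha$). The $s=0$ part of $A_\alpha$ has at most $\lvert A_\alpha\rvert\le g\,v(G)\binom{n}{v(G)-1}$ elements, while for $s\ge 1$ the overlap bound $\lvert\Gamma_\alpha^s\rvert\le\binom{v(G)}{s}\binom{n}{v(G)-s}$ from the proof of Theorem~\ref{thm2.1} still applies. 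For a fixed pair $(G',G'')$ I would write $\mathbb{E}X_\alpha(G')X_\beta(G'')=\mu(G)\,\mathbb{P}(X_\beta(G'')=1\mid X_\alpha(G')=1)$, discard from $G''$ the edges it shares with $G'$, and then rerun exactly the graph combinatorics of Theorem~\ref{thm2.1}: the intersection graph $H$ on $\alpha\cap\beta$ satisfies $d(H)<d(G)$ by strict balancedness, so $G''$ contributes at least $e(G)$ edges outside $E(G')$ when $s\le 1$, at least $\kappa(G,s)$ (by \eqref{alpha}, \eqref{gamma}, \eqref{kappa}) when $2\le s\le v(G)-1$, and at least one when $\beta=\alpha$ but $G''\ne G'$. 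Selecting that many of the new edges and applying \eqref{nukvs} bounds the conditional probability by the corresponding $\nu_{k,e(G),s}$ (the shared-vertex index being at most $s$), and collecting the contributions just as in \eqref{ineq3}, then substituting into \eqref{dtv} and using \eqref{lambdaeqn}, should reproduce \eqref{ermg1111thm31}, with the small-overlap part of the $\theta$-sum furnishing the second copy of the first term.

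The hard part will be this last conditional probability step. In the SBM one conditioned on the hidden classes and bounded each relevant conditional edge probability by $\pi^*$, so \eqref{condgen} followed immediately; here, by contrast, the edges of $G'$ and the ``new'' edges of $G''$ can be dependent even when $\alpha$ and $\beta$ are vertex-disjoint, so one must condition on the whole family $\{\Theta_{u,v}\}$, use the conditional independence of the edge indicators, and then invoke the uniform bound $\nu_{k,v,s}$ — which is exactly where the local-dependence hypothesis on the $\Theta_{u,v}$ (via \eqref{aset2}), and the careful accounting of how many vertices the chosen new-edge set shares with $V(G')$, enter. Verifying that the indices of the $\nu$'s so produced match those in \eqref{ermg1111thm31}, and absorbing the extra $s=0$ terms, is the only genuinely new bookkeeping relative to the proof of Theorem~\ref{thm2.1}.
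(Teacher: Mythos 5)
Your proposal follows exactly the route of the paper's own (very brief) proof: rerun the argument of Theorem~\ref{thm2.1} with the dependency neighbourhood \eqref{aset2}, the size bound \eqref{aboundgen} supplying the extra factor $g$, and the bound $\mathbb{P}(Y_{i,j}=1, (i,j)\in A \mid X_\alpha(G')=1)\le \nu_{|A|,e(G),s}$ as the analogue of \eqref{condgen} in place of the $\pi^*$ estimates. If anything you give more detail than the paper, which states only these three modifications and leaves the remaining bookkeeping (including the $s=0$ overlap case you flag) implicit, so the two arguments are essentially identical.
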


\begin{proof}The proof proceeds almost exactly as that of Theorem \ref{thm2.1}.  The combinatorial arguments are exactly as before, although note the additional factor of $g$ in (\ref{aboundgen}).  We also deal with the expectations in the formulas for $\mathbb{E}\eta_\alpha(G')$ similarly. A complication arises from bounding the expressions  $\mathbb{E}X_{\alpha }(G') X_\beta( G'')$ which occur in  $\mathbb{E}\theta_\alpha(G')$; the analog of \eqref{condgen} is that 
 for any set of edges $A$ such that $| v(A) \cap v(G') | = s$, 
\begin{equation*}
\mathbb{P}(Y_{i,j}=1, (i,j) \in A | X_\alpha(G') = 1) \le \nu_{|A|, e(G), s}.  
\end{equation*}
\end{proof}

\begin{remark}
In the case that the edges are independent and $\nu = \max_\alpha \mathbb{E}(Y_\alpha)$, we find that 
$\nu_{k, v,s } = \nu^k $ does not depend on $v$ or $s$. 
It is now an immediate consequence of Theorem \ref{thm3.1} that 
\begin{align}
d_{TV}(\mathcal{L}(W),Po(\lambda)) &\leq (1-e^{-\lambda})\rho(G) \bigg\{2 \frac{v(G)^2}{v(G)!}n^{v(G)-1}  \nu^{e(G)} + \nu \nonumber \\ 
\label{ermg1111thm31v2}&\quad
 + \sum_{s=2}^{v(G)-1} \binom{v(G)}{s}  \frac{n^{v(G)-s}\nu^{\kappa(G,s)}}{(v(G)-s)!} \bigg\}.
\end{align}
Taking the $\pi_{a,b}$ to be constants in (\ref{ermg1111thm31v2}) yields $\pi^* = \nu$ and recovers the bound (\ref{ermg1111thm}).
\end{remark}

\section{Subgraph counts in a graphon model}\label{graphon} 

The $h$-graphon model uses 
\[\pi_{u,v} = h(U_u, U_v)\]
where $h: [0,1]^2 \rightarrow [0,1]$ is a symmetric, measureable function and $U_a$, $a =1, \ldots, n $, are independent $U[0,1]$ variables which index the graphon; see for example  \cite{airoldi, bickelchen,   latoucherobin, olhedewolfe}, and \cite{ latoucherobin, wolfeolhede} for graphon estimation. 
In this case edges are not independent, but edges which do not share a vertex are independent, and we can choose $M(u,v) = \{1, \ldots, n\}$ and $N(u,v) = \{u,v\}$ so that $g=2$. Hence 

 \begin{align}\label{graphonmu} 
 \mu(G) &= \int_{[0,1]^{v(G)}} d u_1  \cdots du_{v(G)} \prod_{1 \le i < j \le v(G): (i,j) \in E(G)} h(u_i,u_j) . 
     \end{align} 
With
\begin{equation*}\lambda:=\mathbb{E}W=\binom{n}{v(G)} \rho(G)  \mu(G)
\end{equation*}    
the weak dependence structure yields the following corollary of Theorem \ref{thm3.1}. 

\begin{corollary} \label{graphoncor} Let $\pi_{u,v} = h(U_u, U_v)$
where $h: [0,1]^2 \rightarrow [0,1]$ is a symmetric, measurable function and $U_a$, $a =1, \ldots, n $, are independent $U[0,1]$ variables and let  
\[h^* = \max_{u,v} h(u,v).\]
  Suppose that $G$ is a strictly balanced graph.  Then
\begin{align}
d_{TV}(\mathcal{L}(W),Po(\lambda))&\leq 2  (1-e^{-\lambda})\rho(G) \bigg\{2 \frac{v(G)^2}{v(G)!}n^{v(G)-1} (h^*)^{e(G)} + h^*\nonumber\\
\label{cvhjs}&\quad + \sum_{s=2}^{v(G)-1} \binom{v(G)}{s}  \frac{n^{v(G)-s}(h^*)^{\kappa(G,s)}}{(v(G)-s)!} \bigg\}, 
\end{align}
where $\kappa(G,s)$ is given in \eqref{kappa}. 
\end{corollary}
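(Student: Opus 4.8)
The plan is to derive Corollary \ref{graphoncor} as a direct specialisation of Theorem \ref{thm3.1} to the $h$-graphon model. The key observation is that in this model the edges sharing no vertex are independent, so one may take $M(u,v) = \{1,\ldots,n\}$ and $N(u,v) = \{u,v\}$, which gives the least upper bound $g = 2$ on $\{|N(u,v)|\}$. This accounts for the overall factor $2$ in front of the bound \eqref{cvhjs} as compared with \eqref{ermg1111thm}. It remains only to bound the quantities $\nu_{k,v,s}$ appearing in \eqref{ermg1111thm31} in terms of $h^* = \max_{u,v} h(u,v)$.

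First I would fix edge sets ${\bf\gamma} = \{\gamma_1,\ldots,\gamma_k\}$ and ${\bf\delta} = \{\delta_1,\ldots,\delta_v\}$ with ${\bf\gamma} \cap {\bf\delta} = \emptyset$, and estimate $\mathbb{P}(\prod_{i=1}^k Y_{\gamma_i} = 1 \mid \prod_{j=1}^v Y_{\delta_j} = 1)$. Conditioning on the vertex labels $U_1,\ldots,U_n$, the edge indicators are independent, so the conditional probability of $\prod_i Y_{\gamma_i} = 1$ given the $U$'s is $\prod_{i=1}^k h(U_{a_i},U_{b_i})$ if $\gamma_i = (a_i,b_i)$, and each factor is at most $h^*$; since ${\bf\gamma}$ and ${\bf\delta}$ share no edge, the event in the conditioning does not constrain any factor $h(U_{a_i},U_{b_i})$ beyond what the positivity of the conditioning event forces, and a short argument using that $Y_{\delta_j} \in \{0,1\}$ and the tower property shows $\mathbb{P}(\prod_i Y_{\gamma_i} = 1 \mid \prod_j Y_{\delta_j} = 1) \le (h^*)^k$. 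Hence $\nu_{k,v,s} \le (h^*)^k$, uniformly in $v$ and $s$, exactly mirroring the bound \eqref{condgen}, \eqref{condgen}'s analog used in the proof of Theorem \ref{thm3.1}.

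Substituting $g = 2$ and the bounds $\nu_{e(G),e(G),1} \le (h^*)^{e(G)}$, $\nu_{1,e(G),1} \le h^*$, and $\nu_{\kappa(G,s),e(G),s} \le (h^*)^{\kappa(G,s)}$ into \eqref{ermg1111thm31} then yields \eqref{cvhjs} directly. Recall from \eqref{graphonmu} that $\mu(G) = \int_{[0,1]^{v(G)}} \prod_{(i,j)\in E(G)} h(u_i,u_j)\,du_1\cdots du_{v(G)}$, which in particular satisfies $\mu(G) \le (h^*)^{e(G)}$; this gives $\lambda \le \binom{n}{v(G)}\rho(G)(h^*)^{e(G)}$ should one wish to further control the prefactor $1 - e^{-\lambda}$, though it is not needed for the stated form of the bound.

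The only genuinely substantive step is the bound $\nu_{k,v,s} \le (h^*)^k$; everything else is bookkeeping inherited from Theorem \ref{thm3.1}. I expect the mild subtlety there to be arguing cleanly that conditioning on $\prod_j Y_{\delta_j} = 1$ cannot inflate the probability of $\prod_i Y_{\gamma_i} = 1$ above $(h^*)^k$, despite the two edge sets sharing vertices (so the relevant $U$-variables overlap and the events are not independent). The resolution is that, conditionally on all the $U_a$, the two products of indicators are independent (edges are conditionally independent given the vertex labels), so $\mathbb{P}(\prod_i Y_{\gamma_i} = 1 \mid \prod_j Y_{\delta_j} = 1, U_1,\ldots,U_n) = \prod_i h(U_{a_i},U_{b_i}) \le (h^*)^k$ pointwise, and averaging over the conditional law of $(U_a)$ given $\prod_j Y_{\delta_j} = 1$ preserves the inequality.
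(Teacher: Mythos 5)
Your proposal is correct and follows essentially the same route as the paper: take $g=2$ for the graphon dependency neighbourhoods, bound $\nu_{k,v,s}\le (h^*)^k$ uniformly in $v$ and $s$, and substitute into Theorem \ref{thm3.1}. If anything, your justification of $\nu_{k,v,s}\le (h^*)^k$ --- bounding the conditional probability pointwise given the $U_a$ and then averaging over the conditional law of the $U_a$ given the conditioning event --- is slightly more careful than the paper's, which writes the conditional probability as an integral against the unconditional uniform density.
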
 
 
\begin{proof} Due to the conditional independence of the edges, for any edge indicator $Y_{i,j} $ which is not included in $X_\alpha (G')$, 
\begin{align}
\mathbb{P}(Y_{i,j}=1 | X_\alpha(G') = 1)&= \int_{[0,1]^{v(G)}} du_1  \cdots du_{v(G)} \mathbb{P}(Y_{i,j}=1 | U_v = u_v, v \in V(G'))\nonumber  \\
&= \int_{[0,1]^{v(G)}} du_1  \cdots du_{v(G)} h(u_i, u_j) \nonumber \\ & \le  h^*.  \label{wcondone}
\end{align}
Hence
\begin{equation}\label{wcondgen}
\mathbb{P}(Y_{i,j}=1, (i,j) \in A | X_\alpha(G') = 1) \le (h^*)^{|A|}, 
\end{equation} 
and so $\nu_{k,v,s}\leq (h^*)^k$ for all $v$ and $s$.  Also, $g=2$ for graphon models.  The bound (\ref{cvhjs}) now follows from applying bound (\ref{ermg1111thm31}) of Theorem \ref{thm3.1}.
\end{proof} 

 \begin{remark}
 In the proof of Corollary \ref{graphoncor} we could have replaced \eqref{wcondone} by 
 \begin{align}
\mathbb{P}(Y_{i,j}=1 | X_\alpha(G') = 1) 
&= \int_{[0,1]^{v(G)}} du_1 \cdots du_{v(G)} h(u_i, u_j) \nonumber \\ & \le  \mathbb{E} \Big[\max_{U_i, U_j: i \ne j \in v(G)} h (U_i, U_j)\Big] .  \label{wcondonerep}
\end{align}
For example, if $h(x,y) = \frac12(x + y)$ then $h^*=1$ whereas, using the order statistic notation,  
$$\mathbb{E} \Big[\max_{U_i, U_j: i \ne j \in v(G)} h (U_i, U_j)\Big] = \frac12 \mathbb{E} (U_{(n)} + U_{(n-1)}) = \frac{2v(G)-1}{2(v(G)+1)} < 1.$$
Similarly, \eqref{wcondgen} could be replaced by 
\begin{equation}\label{wcondgenrep}
\mathbb{P}(Y_{i,j}=1, (i,j) \in A | X_\alpha(G') = 1) \le 
\mathbb{E} \left[\max_{U_i, i \in v(G)}  \prod_{(i,j) \in A} h (U_i, U_j)\right]. 
\end{equation}
While \eqref{wcondonerep} and \eqref{wcondgenrep} would yield numerically smaller bounds, $h^*$ is easier to calculate in applications. 
 \end{remark}

\begin{example}
In analogy to copulas,  where Archimedean copulas have proved a useful concept, consider what can be coined an {\it{Archimedean}} graphon: 
Let $h: [0,1]^2 \rightarrow [0,1]$ be given by $h(x,y) = \psi( \psi^{[-1]} (x)  + \psi^{[-1]} ( y) ) $ where $\psi: [0,\infty) \rightarrow [0,1] $ is a continuous, strictly decreasing function which is convex on the open interval $(0, \infty)$ and $\psi^{[-1]}(x) = \inf\{u: \psi(u) \le x\}$ is its generalised inverse. Using the Williamson transform we can write 
$$\psi(x)= \int_{(x,\infty)} \left( 1 - \frac{x}{t}\right) d F_R(t) = \mathbb{E}\left( 1 - \frac{x}{R}\right)_+, $$
where $F_R$ is the c.d.f$.$ of a non-negative random variable $R$ which has no atom at zero, see for example \cite{mcneilneheslova}. If $\inf\{ x: dF_R(x) > 0\} = a_R$  with $a_R > 0$  then  
$$ h^* \le \sup_{x \ge 0} \psi(x) = \int_{a_R}^\infty \left( 1 - \frac{a_R}{t}\right) d F_R(t) = 1 - a_R \mathbb{E}(R^{-1}). $$
In contrast, 
$\mathbb{E} \left[\min_{U_i, i \in v(G)}  \prod_{(i,j) \in A}
 \psi( \psi^{[-1]} (U_i)  + \psi^{[-1]} ( U_j) ) \right] 
$ as used in \eqref{wcondgenrep} would be more difficult to calculate.  
\end{example} 

The next example illustrates how scaling considerations enter in the distributional bound. 

\begin{example}
Let $h: [0,1]^2 \rightarrow [0,1]$ be given by $h(x,y) = x y $.  In this case,  \eqref{graphonmu} gives that 
\begin{align*}\mu(G) 
&=\int_{[0,1]^{v(G)}} d u_1 \cdots du_{v(G)} \prod_{i \in V(G)} u_i^{\mathrm{deg}_G(i)}
 = \prod_{i \in V(G)} \frac{1}{\mathrm{deg}_G(i)+1}, 
\end{align*} 
where $\mathrm{deg}_G(i)$ is the degree of $i$ in $G$, that is, the number of edges in $E(G)$ which have $i$ as an end point; $1 \le \mathrm{deg}_G (i) \le v(G)-1$. Thus in order to obtain a moderate value of $\lambda$, the graph $G$ has to have a large number of vertices with degrees which typically grow like $n$; such graphs are also called dense graphs. 
In this example, $h^* = 1$ and the bound in Corollary \ref{graphoncor} will be of the order $n^{v(G)}$ if the graph $G$ is fixed. 

If instead we consider the function 
$f_n: [0,1]^2 \rightarrow [0,1]$; $h_n(x,y) = n^{-\frac{1}{d(G)}} x y $ then the limiting Poisson distribution is not-degenerate and as in \eqref{corpn} the bound in Corollary \ref{graphoncor} tends to 0 with $n$ tending to $\infty$. 
\end{example}

  Finally, we note that the $h$-graphon model can be viewed as a stochastic block model  if $h$ is piecewise constant.  If $0 =s_1 < s_2 < \cdots < s_{Q-1} = 1$, where $s_i=\sum_{k=1}^if_k$, is a partition of $[0,1]$ so that $h$ is constant on each rectangle $[s_i, s_{i+1} ) \times [s_j, s_{j+1})$, then we could assign type $i$ to vertex $v$ if $U_v \in [s_i, s_{i+1}) $. The randomness now lies only in the class assignments. In this case we recover Theorem \ref{thm2.1}.

\section*{Acknowledgements}

MC acknowledges support from the Department of Statistics, University of Oxford, for a Summer Studentship. RG and GR acknowledge support from EPSRC grant EP/K032402/1.  We would like to thank the referee for helpful comments and suggestions.

\end{document}